\title{Convergence in Orlicz spaces by means of the multivariate max-product neural network operators of the Kantorovich type and applications}
\author{{\bf Danilo Costarelli} \hskip1.2cm {\bf Anna Rita Sambucini} \\  {\bf Gianluca Vinti}     \\    \\
   Department of Mathematics and Computer Science \\
            University of Perugia\\
        1, Via Vanvitelli, 06123 Perugia, Italy    \\  \\
  {\small {\tt danilo.costarelli@unipg.it} \hskip0.2cm {\tt anna.sambucini@unipg.it}} \\  {\small {\tt gianluca.vinti@unipg.it}} }
\date{}
\newcommand{\mau}{\geq}
\newcommand{\miu}{\leq}
\newcommand{\ep}{\varepsilon}
\newcommand{\N}{\mathbb{N}}
\newcommand{\R}{\mathbb{R}}
\newcommand{\Z}{\mathbb{Z}}
\newcommand{\disp}{\displaystyle}
\newcommand{\be}{\begin{equation}}
\newcommand{\ee}{\end{equation}}
\newcommand{\uu}{\underline{u}}
\newcommand{\xx}{\underline{x}}
\newcommand{\kk}{\underline{k}}
\newcommand{\yy}{\underline{y}}
\newcommand{\K}{K^{(M)}} % max-product operators
\newcommand{\V}{\bigvee}
\newcommand{\phis}{\phi_{\sigma}}
\newcommand{\Psis}{\Psi_{\sigma}}
\newcommand{\rr}{{\cal R}}
\newtheorem{definition}{Definition}[section]
\newtheorem{theorem}[definition]{Theorem}
\newtheorem{lemma}[definition]{Lemma}
\newtheorem{corollary}[definition]{Corollary}
\begin{document}

\maketitle  

\begin{abstract}
In this paper, convergence results in a multivariate setting have been proved for a family of neural network operators of the max-product type. In particular, the coefficients expressed by Kantorovich type means allow to treat the theory in the general frame of the Orlicz spaces, which includes as particular case the $L^p$-spaces. Concrete examples of sigmoidal activation functions are discussed, for the above operators in different cases of Orlicz spaces.   Finally, concrete applications   to real world cases have been presented in both univariate and multivariate settings. In particular, the case of reconstruction and enhancement of biomedical (vascular) image has been discussed in details.
\vskip0.3cm
\noindent
  {\footnotesize AMS 2010 Mathematics Subject Classification: 41A25, 41A05, 41A30, 47A58}
\vskip0.1cm
\noindent
  {\footnotesize Key words and phrases: sigmoidal function; multivariate max-product neural network operator; Orlicz space; modular convergence; neurocomputing process; data modeling; image processing.} 
\end{abstract}

\section{Introduction}

The study of neural network type operators has been recently investigated by many authors from the theoretical point of view, see e.g., \cite{MAI1,CACH1,CACH2,CALIPA1,BBM1}.

  A particular attention has been reserved to the so-called max-product version of the above operators, which can be useful, for instance, in the applications of probability and fuzzy theory. The introduction of this approach is due to Bede, Coroianu and Gal (see e.g., \cite{COGA1,COGA2}), and recently they summarized their results in a very complete monograph \cite{BECOGA1}. In general, the max-product version of a family of linear operators are sub-linear and has better approximation properties: in many cases, the order of convergence is faster with respect to their linear counterparts.
   
   In the present paper, we extend the results proved in \cite{COSA1} to the multivariate frame: the latter is the most suitable when we deal with neural network type approximation. Indeed, neural networks have been introduced in order to study a simple model for the human brain, which is a very complicated structure composed by billions of neurons; this justify the multidimensional form of the neural networks. The behavior of the (artificial) neurons in the neuronal models is regulated by suitable activation functions which must represent the activation and the quite phases of the neurons (\cite{LIV1}). From the mathematical point of view, functions which better represent the latter fact are those with sigmoidal behavior (\cite{GI1,GI2}). A sigmoidal function (\cite{COVI3}) is any measurable function $\sigma: \R \to \R$, such that:
$$
\lim_{x \to -\infty} \sigma(x) = 0, \quad \quad \mbox{and}\quad\quad \lim_{x \to +\infty} \sigma(x) = 1.
$$

   Problems of interpolation, or more in general, of approximation are related with the topic of training a neural network by sample values belonging to a certain training set: this explain the interest of studying approximation results by means of NN operators in various contexts (\cite{KKS1}). 
   
   Indeed, results in this sense have been deeply studied for what concerns various aspects, such as convergence, order of approximation, saturation, and so on (\cite{COVI4,COVI5,COVI8}).

Here, we consider the above NN operators, with coefficients expressed by Kantorovich means in a multivariate form. It is well-know that this kind of approach based on Kantorovich-type operators is the most suitable in order to approximate not necessarily continuous data (\cite{CMV1,ANCOVI1,BACOVI1,COVI17}). For this reason, we study the above operators in the general setting of Orlicz spaces, which is a very general context, which includes, as particular cases, the $L^p$-spaces. Moreover, since images are typical examples of discontinuous data, Kantorovich-type operators have been recently applied to image processing, see \cite{ING1,ING2}.

From the theoretical results established in the present paper, we obtain, by a unifying approach, a general treatment of the problem of the approximation of both multivariate continuous and not necessarily continuous functions by means of the max-product neural network operators. Further, the results here proved extend to a more general setting those achieved by the authors in \cite{COVI7}.

  We also provide some concrete applications of the obtained results. In particular, we show how it is possible to reconstruct and even to enhance images by means of the above max-product type operators. The proposed examples involve the biomedical images concerning the vascular apparatus. More precisely, we reconstruct a CT image (computer tomography image) depicting an aneurysm of the abdominal aorta artery. Then, we show how it is possible to reconstruct and also to increase the resolution of a given image by the above operators (without losing information), in fact obtaining a detailed image which can be more useful from the diagnostic point of view. 

   Further, also applications in case of one-dimensional data modeling have been presented (see, e.g., \cite{GO1}).
   
   The above mentioned topics give a contribution to an application field which in the last years have been widely studied by means of the use of suitable neural network type models, see e.g., \cite{AG1,EP1}.

  Now, we give a plan of the paper. In Section \ref{sec2} the definition of the above operators and their main properties have been recalled, together with some notations and preliminary results. In Section \ref{sec3} the main convergence results of the paper have been proved. In order to show that the family of multivariate Kantorovich max-product operators are modularly convergent in Orlicz spaces, we adopt the following strategy: (i) we test the norm-convergence for the above operators in case of continuous functions; (ii) we prove a modular inequality for the above operators in $L^\varphi$, where $\varphi$ is a convex $\varphi$-function; finally (iii) we obtain the desired results exploiting the modular density of the continuous functions in $L^\varphi$. In Section \ref{sec4} we provided several examples of Orlicz spaces for which the above theory holds, and also some well-known examples of sigmoidal activation functions, while in Section \ref{sec5} we present the above mentioned real world applications.

%%%%%%%%%%%%%%%%%%%%%%%%%%
\section{The multivariate max-product neural network operators of the Kantorovich type} \label{sec2}

From now on, the symbols $\lfloor \cdot \rfloor$ and $\lceil \cdot \rceil$ will
denote respectively the ``integer'' and the ``ceiling'' part of a given number. Moreover, we define:
$$
\V_{k \in {\cal J} } A_k\ :=\ \sup \left\{ A_{\kk} \in \R,\ \kk \in {\cal J}\right\},
$$
for any set of indexes ${\cal J} \subset \Z^s$.

  Let now $f:\mathcal{R} \to \R$ be a locally integrable and bounded function, where $\mathcal{R} \subset \R^s$, is the box-set of the form $\mathcal{R}:=[a_1,b_1] \times [a_2,b_2]\times \dots \times [a_s,b_s]$. Further, let $n \in \N^+$ such that $\lceil na_i \rceil \miu \lfloor nb_i \rfloor-1$, for every $i=1,...,s$. 
  
    The multivariate max-product neural network (NN) operators of Kantorovich type (see \cite{COVI7}) activated by the sigmoidal function $\sigma$, are defined by:
$$
\K_n(f,\, \xx)\ :=\ \frac{\disp \V_{\kk \in \mathcal{J}_n}\left[ n^s \int_{R^n_{\kk}}f\left( \uu\right)\, d\uu \right] \Psis(n\xx-\kk)}{\disp \V_{\kk \in \mathcal{J}_n} \Psis(n\xx-\kk)}, \hskip0.8cm \xx \in \mathcal{R},
$$
where:
$$
R^n_{\kk}\ :=\ \left[ \frac{k_1}{n}, \frac{k_1+1}{n}   \right] \times \dots \times \left[ \frac{k_s}{n}, \frac{k_s+1}{n}\right],
$$ 
for every $\kk \in \mathcal{J}_n$, and $\mathcal{J}_n$ denotes the set of indexes $\kk \in \Z^s$ such that $\lceil n a_i \rceil \miu k_i \miu \lfloor nb_i \rfloor-1$, $i=1,..., s$, for every $n \in \N^+$ sufficiently large. 

Here, the multivariate function $\Psis : \R^s \to \R$ is defined as follows:
$$
\Psis(\xx)\ :=\ \phis(x_1)\cdot \phis(x_2) \dots \cdot \phis(x_s), 
$$
with $\xx:=(x_1,x_2,..., x_s) \in \R^s$, and:
$$
\phi_{\sigma}(x)\, :=\, \frac{1}{2}\,[\sigma(x+1)-\sigma(x-1)], \hskip1cm x \in \R,
$$
for every non-decreasing sigmoidal function $\sigma: \R \to \R$, which satisfies the following conditions:
\begin{itemize}
\item[$(\Sigma 1)$] $\sigma(x)-1/2$ is an odd function;
\item[$(\Sigma 2)$] $\phis(x)$ is non-decreasing for $x<0$ and non-increasing for $x \mau 0$;
\item[$(\Sigma 3)$] $\sigma(x)={\cal O}(|x|^{-\alpha})$ as $x \to -\infty$, for some $\alpha>0$.
\end{itemize}

Let now $f$, $g : \mathcal{R} \to \R$ be bounded functions. The following properties hold for sufficiently large $n \in \N^+$ (see \cite{COVI2} for the proof in the one-dimensional case, and \cite{COVI7} in case of functions of several variables):

\vskip0.2cm

\noindent (a) if $f(\xx) \miu g(\xx)$, for each $\xx \in \mathcal{R}$, we have $\K_n(f,\xx) \miu \K_n(g,\xx)$, for every $\xx \in \mathcal{R}$ (the operators are monotone);

\vskip0.2cm

\noindent (b) $\K_n(f+g,\, \xx) \miu \K_n(f,\xx)+\K_n(g,\xx)$, $x \in \mathcal{R}$ (the operators are sub-additive);

\vskip0.2cm

\noindent (c) $\left| \K_n(f, \xx)-\K_n(g, \xx) \right| \miu \K_n(|f-g|,\, \xx)$, $\xx \in \mathcal{R}$;

\vskip0.2cm

\noindent (d)  $\K_n(\lambda f, \xx) = \lambda \K(f, \xx)$, $\lambda>0$, $\xx \in \mathcal{R}$ (the operators are positive homogeneous).

\vskip0.2cm

    In \cite{COVI3,COVI4} some important properties of the functions $\phi_{\sigma}(x)$ and $\Psis(\xx)$ have been proved. In the following lemma, we recall those that can be useful in order to prove the convergence results for the above operators in Orlicz spaces for functions of several variables.
\begin{lemma} \label{lemma1}
$($i$)$ $\phi_{\sigma}(x) \mau 0$ for every $x \in \R$, with $\phi_{\sigma}(2) 
> 0$, and moreover $\disp \lim_{x \to \pm \infty} \phi_{\sigma}(x) = 0$;

\vskip0.2cm
\noindent $($ii$)$ $\phis(0) \mau \phis(x)$, for every $x \in \R$, and $\phis(0) \miu 1/2$. Consequently, $\Psis(\underline{0}) \mau \Psis(\xx)$, for every $\xx \in \R^s$, and $\Psis(\underline{0}) \miu 2^{-s}$;

\vskip0.2cm

\noindent $($iii$)$ $\Psis(\xx)= \mathcal{O}(\|\xx\|_2^{-\alpha})$, as $\|\xx\|_2 \to +\infty$, where $\alpha$ is the positive constant of condition $(\Sigma 3)$, and $\|\cdot\|_2$ denotes the usual Euclidean norm of $\R^s$;

\vskip0.2cm

\noindent $($iv$)$ $\Psis \in L^1(\R^s)$;

\vskip0.2cm

\noindent $($v$)$ For any fixed $\xx \in \mathcal{R}$, there holds:
$$
\V_{\kk \in \mathcal{J}_n} \Psis(n \xx - \kk)\ \mau\ [\phis(2)]^s\ >\ 0.
$$
\end{lemma}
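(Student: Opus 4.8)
The plan is to establish the five items in order, since (i) underlies all the rest, and to reserve the bulk of the effort for (v), which is the only genuinely new assertion. The unifying tools will be the product structure $\Psis(\xx)=\prod_i\phis(x_i)$ together with two elementary facts about $\phis$: its evenness and its unimodality.

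First I would prove (i). Nonnegativity of $\phis$ is immediate from the monotonicity of $\sigma$, since $\phis(x)=\frac12[\sigma(x+1)-\sigma(x-1)]\mau 0$; the two limits $\lim_{x\to\pm\infty}\phis(x)=0$ follow by letting both arguments tend to the same horizontal asymptote ($1$ at $+\infty$, $0$ at $-\infty$). For $\phis(2)>0$ I would invoke $(\Sigma 1)$, which forces $\sigma(0)=1/2$ and, more usefully, the evenness identity $\phis(-x)=\phis(x)$ obtained from $\sigma(-t)=1-\sigma(t)$; combined with the strict separation of the asymptotes this yields $\phis(2)=\frac12[\sigma(3)-\sigma(1)]>0$. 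Next, (ii) is an immediate consequence of $(\Sigma 2)$: since $\phis$ increases on $(-\infty,0)$ and decreases on $[0,\infty)$, its maximum sits at $0$, so $\phis(0)\mau\phis(x)$, while $\phis(0)=\sigma(1)-1/2\miu 1/2$ follows from $\sigma(1)\miu 1$ and $\sigma(-1)=1-\sigma(1)$. The multivariate statements then drop out of the product structure and the nonnegativity from (i), giving $\Psis(\xx)\miu[\phis(0)]^s=\Psis(\underline{0})\miu 2^{-s}$.

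For (iii) I would first upgrade $(\Sigma 3)$ to the two-sided decay $\phis(x)=\mathcal{O}(|x|^{-\alpha})$ as $|x|\to\infty$, using the evenness of $\phis$ to transfer the decay at $-\infty$ to $+\infty$; then, since $\|\xx\|_2\to\infty$ forces $|x_j|\mau\|\xx\|_2/\sqrt s$ for some coordinate $j$, I bound that single factor by $\mathcal{O}(\|\xx\|_2^{-\alpha})$ and the remaining factors by the constant $1/2$ from (ii). Item (iv) I would obtain most cleanly not from (iii) (which would require $\alpha>1$) but from the telescoping computation $\int_\R\phis=\frac12\int_\R[\sigma(x+1)-\sigma(x-1)]\,dx=1$, valid because $\phis\mau 0$ and the two shifted integrals cancel except for a boundary contribution tending to $2$; Tonelli's theorem then gives $\Psis\in L^1(\R^s)$ with unit integral.

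The heart of the lemma is (v), and this is where I expect the real work. The product structure again reduces the problem to the one-dimensional claim that, for each coordinate $i$ and each $x_i\in[a_i,b_i]$, one can select an admissible index $k_i\in\{\lceil na_i\rceil,\dots,\lfloor nb_i\rfloor-1\}$ with $|nx_i-k_i|\miu 2$; granting this, the evenness and unimodality of $\phis$ give $\phis(nx_i-k_i)\mau\phis(2)$, and multiplying over $i=1,\dots,s$ yields $\Psis(n\xx-\kk)\mau[\phis(2)]^s$ for that particular $\kk\in\mathcal{J}_n$, whence the supremum is at least $[\phis(2)]^s$, strictly positive by (i). The main obstacle is precisely the admissibility of the index near the two endpoints, where the naive choice $k_i=\lfloor nx_i\rfloor$ can fall outside the range. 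I would resolve this by a three-case analysis: take $k_i=\lfloor nx_i\rfloor$ on the interior (so $nx_i-k_i\in[0,1)$); clamp to $k_i=\lceil na_i\rceil$ when $nx_i<\lceil na_i\rceil$, which keeps $nx_i-k_i\in(-1,0)$ because $\lceil na_i\rceil<na_i+1$; and clamp to $k_i=\lfloor nb_i\rfloor-1$ when $nx_i\mau\lfloor nb_i\rfloor$, which keeps $nx_i-k_i\in[1,2)$ because $nb_i-\lfloor nb_i\rfloor<1$. In every case $|nx_i-k_i|\miu 2$ with $k_i$ admissible, so the claim follows.
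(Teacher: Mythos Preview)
The paper does not actually prove Lemma~\ref{lemma1}; it only states the properties and refers to \cite{COVI3,COVI4} for the proofs. So there is no in-text argument to compare against, and your write-up is in effect a self-contained reconstruction.

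Your arguments for (ii)--(v) are correct and follow the standard route used in the cited literature: evenness and unimodality of $\phis$ for (ii), the pigeonhole-on-coordinates bound for (iii), the telescoping computation $\int_\R \phis = 1$ together with Tonelli for (iv), and the coordinatewise selection of an admissible $k_i$ with $|nx_i - k_i|\le 2$ for (v). The three-case boundary analysis in (v) (interior, clamp left, clamp right) is exactly what is needed and the index arithmetic is carried out correctly.

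There is one genuine gap, in (i): the claim that ``strict separation of the asymptotes'' forces $\phis(2)=\tfrac12[\sigma(3)-\sigma(1)]>0$ does not follow from $(\Sigma1)$--$(\Sigma3)$ as stated in this paper. Consider $\sigma$ equal to $0$ on $(-\infty,-1]$, to $(x+1)/2$ on $[-1,1]$, and to $1$ on $[1,\infty)$; this is a non-decreasing sigmoidal function satisfying all three conditions, yet $\sigma(3)=\sigma(1)=1$ and hence $\phis(2)=0$. Your attempt to use $(\Sigma2)$ here does not save the day either: if $\phis(2)=0$ then $\phis\equiv 0$ on $[2,\infty)$ by unimodality, which forces $\sigma\equiv 1$ on $[1,\infty)$ --- perfectly consistent with the hypotheses. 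The strict positivity $\phis(2)>0$ is really an additional mild assumption (equivalent to $\sigma(1)<1$), which the cited sources secure through extra structure on $\sigma$; note, for instance, that the paper's ramp example $\sigma_R$ has its breakpoints at $\pm 3/2$ rather than $\pm 1$ precisely so that $\phi_{\sigma_R}(2)>0$. You should flag $\phis(2)>0$ as an assumption inherited from the references rather than claim to derive it.
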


  Now, we can introduce the general setting in which we will work, i.e., the Orlicz spaces. We begin recalling the following definition.
\vskip0.1cm
  
 A function $\varphi: \R^+_0 \to \R^+_0$ which satisfies the following assumptions:
\begin{itemize}
	\item[$(\varphi 1)$] $\varphi \left(0\right)=0$, $\varphi \left(u\right)>0$ for every $u>0$;
	\item[$(\varphi 2)$] $\varphi$ is continuous and non decreasing on $\R^+_0$;
	\item[$(\varphi 3)$] $\disp \lim_{u\to +\infty}\varphi(u)\ =\ + \infty$,
\end{itemize}
is said to be a $\varphi$-function. Let $M({\cal R})$ denotes the set of all (Lebesgue) measurable functions $f:{\cal R} \to \R$. For any fixed $\varphi$-function $\varphi$, it is possible to define the functional $I^{\varphi} : M(\rr)\to [0,+\infty]$, as follows:
$$
I^{\varphi} \left[f\right] := \int_{\rr} \varphi(\left| f(\xx) \right|)\ d\xx,\ \hskip0.5cm f \in M(\rr).
$$
The functional $I^\varphi$ is called a {\em modular} on $M(\rr)$. Then, we can define the {\em Orlicz space} generated by $\varphi$ by the set:
$$
L^{\varphi}(\rr) := \left\{f \in M\left(\rr\right):\ I^{\varphi}[\lambda f]<+\infty,\ \mbox{for\ some}\ \lambda>0\right\}.
$$
Now, exploiting the definition of the modular functional $I^{\varphi}$, it is possible to introduce a notion of convergence in $L^{\varphi}(\rr)$, the so-called {\em modular convergence}.

   More precisely, a family of functions $(f_w)_{w>0} \subset L^{\varphi}(\rr)$ is said {\em modularly convergent} to a function $f \in L^{\varphi}(\rr)$ if:
\be \label{modular-convergence}
\lim_{w \to +\infty} I^{\varphi}\left[\lambda(f_w-f)\right]\ =\ 0,
\ee
for some $\lambda>0$. If (\ref{modular-convergence}) holds for every $\lambda>0$, we will say that the family $(f_w)_{w>0}$ is norm convergent to $f$. Obviously, the norm convergence is in general stronger than modular convergence. Both the above definitions become equivalent if and only if the $\varphi$-function $\varphi$, which generates $L^\varphi(\rr)$, satisfies:
$$
\varphi(2u)\ \miu\ M\varphi(u), \hskip0.8cm u \in \R^+_0 \quad \quad (\Delta_2-condition),
$$ 
for some $M>0$.

    Finally, if we denote by $C(\rr)$ the space of all continuous functions $f:\rr \to \R$, it turns out that $C(\rr) \subset L^{\varphi}(\rr)$, and moreover, there holds that $C(\rr)$ is dense in $L^{\varphi}(\rr)$ with respect to the topology induced by the modular convergence. 
  
  Similarly, denoting respectively by $C_+(\rr)$ and $L^{\varphi}_+(\rr)$ the subspaces of $C(\rr)$ and $L^{\varphi}(\rr)$ of the non-negative and almost everywhere non-negative functions, respectively, it turns out that also $C_+(\rr)$ is modularly dense in $L^{\varphi}_+(\rr)$.
  
  For more details concerning Orlicz spaces, see e.g., \cite{MUORL,MU1,BAMUVI,AV-09,AV2,AV-14,COVI6,BAKAVI1}.
  
  In conclusion, we recall some well-known convergence result for the operators $\K_n$, that will be useful in the next sections.
\begin{theorem}[\cite{COVI7}] \label{th1}
Let $f:\mathcal{R} \to \R^+_0$ be a given bounded function. Then,
$$
\lim_{n \to +\infty} \K_n(f, \xx)\ =\ f(\xx),
$$
at any point $\xx \in \mathcal{R}$ of continuity for $f$. Moreover, if $f \in C_+(\mathcal{R})$, we have:
$$
\lim_{n \to +\infty} \|\K_n(f, \cdot)\ -\ f(\cdot)\|_{\infty}\ =\ 0.
$$
\end{theorem}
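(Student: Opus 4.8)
My plan is to reduce the problem to estimating the action of $\K_n$ on a small perturbation, exploiting the sub-linear structure recorded in properties (a)--(d). The key preliminary observation is that $\K_n$ reproduces non-negative constants: if $g \equiv c \mau 0$, then each Kantorovich mean equals $n^s \int_{R^n_{\kk}} c\, d\uu = c$, since the box $R^n_{\kk}$ has volume $n^{-s}$, and as $c \mau 0$ can be factored out of the supremum we get $\K_n(g, \xx) = c$ for every $\xx$. Fixing a continuity point $\xx \in \mathcal{R}$ and applying property (c) with $g$ the constant function equal to $f(\xx)$, I would obtain
$$
|\K_n(f, \xx) - f(\xx)| = |\K_n(f, \xx) - \K_n(g, \xx)| \miu \K_n(|f - f(\xx)|, \xx).
$$
It therefore suffices to show $\K_n(h, \xx) \to 0$, where $h(\uu) := |f(\uu) - f(\xx)| \mau 0$ is bounded by $2\|f\|_\infty =: 2M$.

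Next I would perform a near/far splitting of the index set $\mathcal{J}_n$. Given $\ep > 0$, continuity at $\xx$ provides $\delta > 0$ with $h(\uu) < \ep$ whenever $\|\uu - \xx\|_2 < \delta$. Write $\mathcal{J}_n = \mathcal{J}_n' \cup \mathcal{J}_n''$, where $\mathcal{J}_n' := \{\kk : \|\xx - \kk/n\|_2 \miu \delta/2\}$ and $\mathcal{J}_n''$ is its complement. For $\kk \in \mathcal{J}_n'$ and $\uu \in R^n_{\kk}$ one has $\|\uu - \xx\|_2 \miu \sqrt{s}/n + \delta/2 < \delta$ as soon as $n > 2\sqrt{s}/\delta$, so the Kantorovich mean satisfies $n^s \int_{R^n_{\kk}} h\, d\uu < \ep$. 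For $\kk \in \mathcal{J}_n''$ we have $\|n\xx - \kk\|_2 > n\delta/2$, so the decay estimate (iii), $\Psis(\underline{y}) = \mathcal{O}(\|\underline{y}\|_2^{-\alpha})$, yields $\Psis(n\xx - \kk) \miu C(n\delta/2)^{-\alpha}$ for $n$ large, uniformly over $\mathcal{J}_n''$. Since the supremum over a union is the maximum of the suprema, and the denominator $D_n := \V_{\kk \in \mathcal{J}_n} \Psis(n\xx - \kk)$ is strictly positive, I can write $\K_n(h, \xx) = \max\{A_n/D_n,\, B_n/D_n\}$, with $A_n$, $B_n$ the numerator suprema over $\mathcal{J}_n'$, $\mathcal{J}_n''$. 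The near term obeys $A_n/D_n \miu \ep \cdot (\V_{\kk \in \mathcal{J}_n'}\Psis(n\xx-\kk))/D_n \miu \ep$, while the lower bound (v), $D_n \mau [\phis(2)]^s > 0$, gives $B_n/D_n \miu 2M\, C (n\delta/2)^{-\alpha}/[\phis(2)]^s \to 0$. Hence $\limsup_n \K_n(h, \xx) \miu \ep$, and letting $\ep \to 0$ closes the pointwise statement.

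The step I expect to require the most care is the far-field control, but here the max-product structure is an advantage over the linear case: rather than summing many small contributions, which would demand integrability of $\Psis$, one needs only a single uniform decay bound on the largest far term, supplied by (iii), weighed against the uniform positive lower bound (v) on the denominator. For the uniform statement I would invoke compactness of $\mathcal{R}$: when $f \in C_+(\mathcal{R})$ it is uniformly continuous, so $\delta$ may be chosen independently of $\xx$, and every estimate above — the threshold $n > 2\sqrt{s}/\delta$, the decay bound over $\mathcal{J}_n''$, and the constant lower bound $[\phis(2)]^s$ — is uniform in $\xx \in \mathcal{R}$. Consequently the bound $\K_n(|f - f(\xx)|, \xx) \miu \max\{\ep,\, 2M\,C(n\delta/2)^{-\alpha}/[\phis(2)]^s\}$ holds uniformly in $\xx$, which delivers $\|\K_n(f, \cdot) - f(\cdot)\|_\infty \to 0$.
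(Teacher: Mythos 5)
Theorem \ref{th1} is stated in the paper without proof, being recalled from \cite{COVI7}; your argument is correct and is essentially the same standard scheme used there: reduction to $\K_n(|f-f(\xx)|,\xx)$ via reproduction of non-negative constants and property (c), a near/far splitting of $\mathcal{J}_n$, the decay of $\Psis$ from Lemma \ref{lemma1} (iii) played against the lower bound (v) for the denominator, and uniform continuity on the compact set $\mathcal{R}$ for the sup-norm statement. No gaps.
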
 
% 

%%%%%%%%%%%%%%%%%%%%%%%%

\section{Convergence in Orlicz spaces} \label{sec3}

From now on, we always consider convex $\varphi$-functions $\varphi$. Hence, we begin by proving the following auxiliary result.
\begin{theorem} \label{lux-conv}
Let $f \in C_+(\rr)$ be fixed. Then, for every $\lambda>0$:
$$
\lim_{n \to +\infty} I^{\varphi}\left[ \lambda\left(   \K_n(f, \cdot) - f(\cdot) \right)\right]\ =\ 0.
$$
\end{theorem}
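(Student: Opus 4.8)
The plan is to deduce this modular (indeed norm) convergence statement directly from the uniform convergence already available in Theorem \ref{th1}, exploiting the compactness of $\mathcal{R}$ together with the continuity and monotonicity of the $\varphi$-function $\varphi$. Fix $\lambda > 0$. Since $f \in C_+(\mathcal{R})$, the function $f$ is bounded and each $\K_n(f, \cdot)$ is a well-defined bounded (measurable) function, so the difference $\K_n(f, \cdot) - f(\cdot)$ is measurable and the modular $I^{\varphi}\left[\lambda(\K_n(f, \cdot) - f(\cdot))\right]$ is meaningful.

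First I would set $\ep_n := \|\K_n(f, \cdot) - f(\cdot)\|_{\infty}$. By the second part of Theorem \ref{th1}, $\ep_n \to 0$ as $n \to +\infty$. For every $\xx \in \mathcal{R}$ we then have the pointwise bound $\lambda\, |\K_n(f, \xx) - f(\xx)| \miu \lambda \ep_n$, and since $\varphi$ is non-decreasing (property $(\varphi 2)$) this yields $\varphi(\lambda\, |\K_n(f, \xx) - f(\xx)|) \miu \varphi(\lambda \ep_n)$ uniformly in $\xx$. Integrating this inequality over $\mathcal{R}$ and using that $\mathcal{R}$ is a bounded box, hence of finite Lebesgue measure $|\mathcal{R}| = \prod_{i=1}^{s}(b_i - a_i) < +\infty$, I obtain
$$
I^{\varphi}\left[\lambda(\K_n(f, \cdot) - f(\cdot))\right]\ =\ \int_{\mathcal{R}} \varphi\left(\lambda\, |\K_n(f, \xx) - f(\xx)|\right)\, d\xx\ \miu\ \varphi(\lambda \ep_n)\, |\mathcal{R}|.
$$
Finally, since $\lambda \ep_n \to 0$ and $\varphi$ is continuous with $\varphi(0) = 0$ (properties $(\varphi 1)$ and $(\varphi 2)$), the right-hand side tends to $0$ as $n \to +\infty$, which proves the claim for the arbitrary $\lambda > 0$ fixed at the outset.

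I do not expect any genuine obstacle here: the argument is essentially that uniform convergence on a finite-measure domain automatically upgrades to modular convergence for any $\varphi$-function. Note that the convexity of $\varphi$ plays no role in this particular step — only continuity, monotonicity and $\varphi(0)=0$ are used — and that the conclusion is in fact a \emph{norm}-convergence statement precisely because the estimate holds for every $\lambda > 0$ with no restriction. The only points deserving a line of care are the measurability and boundedness of $\K_n(f, \cdot)$, guaranteed by $f$ being continuous on the compact set $\mathcal{R}$, and the finiteness of $|\mathcal{R}|$, both immediate from the standing hypotheses.
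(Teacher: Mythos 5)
Your proposal is correct and takes essentially the same route as the paper: both arguments reduce the claim to the uniform convergence $\|\K_n(f,\cdot)-f(\cdot)\|_\infty \to 0$ of Theorem \ref{th1} together with the finiteness of $|\mathcal{R}|$, bounding the integrand by a constant. The only (immaterial) difference is the last step: the paper uses convexity to write $\varphi(\lambda\varepsilon)\le \varepsilon\,\varphi(\lambda)$, whereas you pass to the limit using the continuity of $\varphi$ at $0$ and $\varphi(0)=0$ — and your remark that convexity is not actually needed for this particular theorem is accurate.
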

\begin{proof}
Let $\ep>0$ be fixed. For every fixed $\lambda>0$, using the convexity of $\varphi$, and in view of Theorem \ref{th1}, we have:
$$
I^{\varphi}\left[ \lambda\left( \K_n(f, \cdot) - f(\cdot) \right)\right]\ =\ \int_\rr\varphi\left(\lambda \left| \K_n(f, \xx) - f(\xx) \right| \right)\, d\xx
$$
$$
\miu\ \int_\rr \varphi\left(\lambda \| \K_n(f, \cdot) - f(\cdot) \|_{\infty} \right)\, d\xx\ \miu\ \int_\rr \varphi\left(\lambda\, \ep \right)\, d\xx\ \miu\ \ep\, \varphi(\lambda) |\rr|,
$$
for $n \in \N^+$ sufficiently large, where $|\rr|$ denotes the Lebesgue measure of $\rr$. Hence the proof follows since $\ep>0$ is arbitrary.
\end{proof}

  Theorem \ref{lux-conv} states that the family $\K_n f$ is norm convergent to $f \in C_+(\rr)$.
Now, we need to prove a modular inequality for the above operators, in the setting of Orlicz spaces.
\begin{theorem} \label{th3}
For every $f$, $g \in L^{\varphi}_+(\rr)$, and $\lambda>0$, it turns out that:
$$
I^{\varphi}\left[ \lambda\left( \K_n(f, \cdot) - \K_n(g, \cdot) \right)\right]\ \miu\ \|\Psis\|_1\, I^{\varphi}\left[ \phis(2)^{-s}\lambda\,  \left(f(\cdot)-g(\cdot)\right) \right],
$$
for $n \in \N^+$ sufficiently large.
\end{theorem}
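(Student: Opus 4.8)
The plan is to reduce the whole estimate to the single non-negative function $h := |f-g| \in L^{\varphi}_+(\rr)$ and to bound $\varphi\big(\lambda\, \K_n(h,\xx)\big)$ pointwise. First I would invoke property (c), which gives $|\K_n(f,\xx) - \K_n(g,\xx)| \miu \K_n(h, \xx)$; since $\varphi$ is non-decreasing this yields $\varphi\big(\lambda|\K_n(f,\xx)-\K_n(g,\xx)|\big) \miu \varphi\big(\lambda\, \K_n(h,\xx)\big)$. Next, using Lemma \ref{lemma1}(v) to bound the denominator of $\K_n(h,\xx)$ from below by $[\phis(2)]^s>0$ and writing $c := \phis(2)^{-s}\lambda$, I obtain
$$
\varphi\big(\lambda\, \K_n(h,\xx)\big) \miu \varphi\left( c\, \V_{\kk \in \mathcal{J}_n}\left[ n^s\int_{R^n_\kk} h(\uu)\, d\uu\right]\Psis(n\xx-\kk)\right).
$$
Because $\mathcal{J}_n$ is a finite index set and $\varphi$ is continuous and non-decreasing, the supremum (in fact a maximum) commutes with $\varphi$, so the right-hand side equals $\V_\kk \varphi\big(c\,[n^s\int_{R^n_\kk}h\,d\uu]\,\Psis(n\xx-\kk)\big)$, which I then bound by the corresponding \emph{sum} over $\kk$.

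The heart of the argument is to estimate each summand $\varphi\big(c\,\Psis(n\xx-\kk)\,n^s\int_{R^n_\kk}h(\uu)\,d\uu\big)$ by something linear in $h$ and in $\Psis$, so that the total mass $\|\Psis\|_1$ eventually appears. Since $|R^n_\kk| = n^{-s}$, the quantity $n^s\int_{R^n_\kk}h\,d\uu$ is the mean value of $h$ over $R^n_\kk$, and by convexity of $\varphi$ (Jensen's inequality) I would move $\varphi$ inside this average:
$$
\varphi\left(n^s\int_{R^n_\kk} c\,\Psis(n\xx-\kk)\, h(\uu)\, d\uu\right) \miu n^s\int_{R^n_\kk}\varphi\big(c\,\Psis(n\xx-\kk)\, h(\uu)\big)\, d\uu.
$$
Then, observing from Lemma \ref{lemma1}(ii) that $0 \miu \Psis(n\xx-\kk) \miu 2^{-s}\miu 1$, and that convexity together with $\varphi(0)=0$ gives $\varphi(t\,u)\miu t\,\varphi(u)$ for $t\in[0,1]$, I would pull the factor $\Psis(n\xx-\kk)$ out linearly, getting $\varphi\big(c\,\Psis(n\xx-\kk)h(\uu)\big) \miu \Psis(n\xx-\kk)\,\varphi\big(c\, h(\uu)\big)$.

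Collecting these estimates and integrating over $\xx\in\rr$, the non-negativity of all terms lets me apply Tonelli's theorem to exchange the finite sum with the integrals; the only factor depending on $\xx$ is then $\Psis(n\xx-\kk)$, and the change of variables $\yy = n\xx-\kk$ gives $\int_\rr \Psis(n\xx-\kk)\,d\xx \miu n^{-s}\int_{\R^s}\Psis(\yy)\,d\yy = n^{-s}\|\Psis\|_1$, finite by Lemma \ref{lemma1}(iv). The factors $n^s$ and $n^{-s}$ cancel, leaving $\|\Psis\|_1 \sum_\kk \int_{R^n_\kk}\varphi\big(c\, h(\uu)\big)\,d\uu$; since the boxes $R^n_\kk$ have pairwise disjoint interiors and their union is contained in $\rr$, this sum is at most $\int_\rr \varphi\big(c\,h(\uu)\big)\,d\uu = I^{\varphi}[\,\phis(2)^{-s}\lambda\,(f-g)\,]$, which is exactly the claimed bound. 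I expect the main obstacle to be the correct sequencing of the nonlinear steps: the $\V$ must be removed by monotonicity of $\varphi$ \emph{before} convexity can be used, and the factor $\Psis$ must be extracted linearly (via $\Psis\miu 1$ and $\varphi(0)=0$) rather than absorbed into $\varphi$, since only a linear dependence on $\Psis$ integrates to the desired constant $\|\Psis\|_1$.
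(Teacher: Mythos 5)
Your proposal is correct, and all the structural ingredients coincide with the paper's: property (c), the lower bound $[\phis(2)]^s$ from Lemma \ref{lemma1} (v), the identity $\varphi\left(\V_{\kk} A_{\kk}\right)=\V_{\kk}\varphi(A_{\kk})$ for finite index sets, the linear extraction of $\Psis(n\xx-\kk)$ via convexity, $\varphi(0)=0$ and $\Psis \miu 2^{-s}\miu 1$, and Jensen's inequality on the cell averages (the paper extracts $\Psis$ before applying Jensen, you do the reverse; both orders are legitimate since $\Psis(n\xx-\kk)$ is constant in $\uu$). Where you genuinely diverge is the final bookkeeping that produces the two global quantities. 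The paper keeps the maximum over $\kk$ throughout, enlarges each cell integral $n^s\int_{R^n_{\kk}}\varphi(\cdot)\,d\uu$ to $n^s\int_{\rr}\varphi(\cdot)\,d\uu$, factors out $I^{\varphi}\left[\phis(2)^{-s}\lambda(f-g)\right]$, and is left with $\V_{\kk}\, n^s\int_{\rr}\Psis(n\xx-\kk)\,d\xx\miu\|\Psis\|_1$ by the change of variables and shift invariance. You instead replace the maximum by the sum over $\kk$, integrate each term in $\xx$ first (Tonelli on a finite sum of non-negative terms) to produce a factor $n^{-s}\|\Psis\|_1$ per term, and then recover $I^{\varphi}\left[\phis(2)^{-s}\lambda(f-g)\right]$ from $\sum_{\kk}\int_{R^n_{\kk}}\varphi\left(\phis(2)^{-s}\lambda\,h\right)d\uu\miu\int_{\rr}\varphi\left(\phis(2)^{-s}\lambda\,h\right)d\uu$, using that the cells $R^n_{\kk}$ lie in $\rr$ and overlap only on null sets. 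The two routes trade crudeness in different places (max-to-sum for you, cell-to-$\rr$ for the paper) and land on exactly the same constant; yours has the minor advantage of preserving the locality of the Kantorovich means until the last step, while the paper's keeps the sub-linear (max) structure intact for one more line.
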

\begin{proof}
Using inequality (c) of $\K_n$, and Lemma \ref{lemma1} (v), we can write what follows:
$$
I^{\varphi}\left[ \lambda\left( \K_n(f, \cdot) - \K_n(g, \cdot) \right)\right]\ =\ \int_\rr \varphi\left( \lambda \left| \K_n(f, \xx) - \K_n(g, \xx)   \right|   \right)\, d\xx
$$
$$
\hskip-6.6cm \miu\ \int_\rr \varphi\left( \lambda\, \K_n(|f-g|, \xx) \right)\, d\xx
$$
$$
\miu\ \int_\rr \varphi\left( \lambda\, \phis(2)^{-s} \V_{\kk \in {\cal J}_n} \left[n^s \int_{R^n_{\kk}} |f(\uu)-g(\uu)|\, d\uu \right] \Psis(n\xx-\kk) \right)\, d\xx.
$$
Now, we observe that:
\be \label{scambio}
\varphi\left( \V_{k \in {\cal J}} A_k  \right)\ =\ \V_{k \in {\cal J}}\varphi\left( A_k  \right),
\ee
for any finite set of indexes ${\cal J} \subset \Z^s$, since $\varphi$ is non-decreasing. Thus, using (\ref{scambio}), the convexity of $\varphi$, Lemma \ref{lemma1} (ii), and the Jensen's inequality (see, e.g., \cite{COSP2}), we obtain:
$$
\hskip-5cm I^{\varphi}\left[ \lambda\left( \K_n(f, \cdot) - \K_n(g, \cdot) \right)\right]\ 
$$
$$
\miu\ \int_\rr \V_{\kk \in {\cal J}_n}  \varphi\left( \lambda\, \phis(2)^{-s} \left[n^s \int_{R^n_{\kk}} |f(\uu)-g(\uu)|\, d\uu \right] \Psis(n\xx-\kk) \right)\, dx
$$
$$
\miu\ \int_\rr \V_{\kk \in {\cal J}_n} \Psis(n\xx-\kk)\,  \varphi\left( \lambda\, \phis(2)^{-s} \left[n^s \int_{R^n_{\kk}} |f(\uu)-g(\uu)|\, d\uu \right] \right)\, d\xx
$$
$$
\hskip-0.2cm \miu\ \int_\rr d\xx \left\{\V_{\kk \in {\cal J}_n} \Psi(n\xx-\kk)\, n^s \int_{R^n_{\kk}}  \varphi\left( \lambda\, \phis(2)^{-s} |f(\uu)-g(\uu)|  \right)\, d\uu \right\}
$$
$$
\hskip-1.2cm \miu\ \int_\rr d\xx \left\{ \V_{\kk \in {\cal J}_n} \Psis(n\xx-\kk)\, n^s \int_\rr  \varphi\left( \lambda\, \phis(2)^{-s} |f(\uu)-g(\uu)|  \right)\, d\uu\right\}
$$
$$
\hskip-2.8cm =\ I^{\varphi}\left[ \lambda\, \phis(2)^{-s} \left(f(\cdot)-g(\cdot)\right) \right]\, \V_{\kk \in {\cal J}_n}  n^s\, \int_\rr  \Psis(n\xx-\kk)\, d\xx.
$$
Putting $\yy=n\xx$, and recalling that the $L^1$-norm is shift invariant, i.e.,
$$
\|\Psis\|_1\ =\ \int_{\R^s} \Psis(\yy)\ d\yy\ =\ \int_{\R^s} \Psis(\yy - \kk)\ d\yy,
$$
for every $\kk \in \Z^s$, we finally obtain:
$$
\hskip-5cm I^{\varphi}\left[ \lambda\left( \K_n(f, \cdot) - \K_n(g, \cdot) \right)\right]\ 
$$
$$
\miu\ I^{\varphi}\left[ \lambda\, \phis(2)^{-s} \left(f(\cdot)-g(\cdot)\right) \right]\, \left[ \V_{\kk \in {\cal J}_n}  \int_{\R^s} \Psis(\yy-\kk)\, d\yy \right]
$$
$$
\hskip-2.7cm =\ I^{\varphi}\left[ \phis(2)^{-s} \lambda\, \left(f(\cdot)-g(\cdot)\right) \right]\, \|\Psis\|_1,
$$
for every $n \in \N^+$ sufficiently large.
\end{proof}
Now, we are able to prove a modular convergence theorem for the multivariate Kantorovich max-product NN operators.
\begin{theorem} \label{th4}
Let $f \in L^{\varphi}_+(\rr)$ be fixed. Then there exists $\lambda>0$ such that:
$$
\lim_{n \to +\infty} I^{\varphi}\left[\lambda \left( \K_n(f, \cdot) - f(\cdot)  \right)\right]\ =\ 0.
$$
\end{theorem}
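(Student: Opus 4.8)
The plan is to reduce the assertion for a general $f \in L^{\varphi}_+(\rr)$ to the continuous case, exploiting the three tools already at hand: the norm convergence for continuous functions (Theorem \ref{lux-conv}), the modular inequality (Theorem \ref{th3}), and the modular density of $C_+(\rr)$ in $L^{\varphi}_+(\rr)$. First I would fix $f \in L^{\varphi}_+(\rr)$ and invoke the density: there exist $\lambda' > 0$ and a sequence $(g_j) \subset C_+(\rr)$ with $I^{\varphi}[\lambda'(f - g_j)] \to 0$ as $j \to \infty$.

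The core step is a three-term splitting combined with the convexity of the modular. Writing
$$
\lambda(\K_n f - f) = \tfrac13\big[3\lambda(\K_n f - \K_n g_j)\big] + \tfrac13\big[3\lambda(\K_n g_j - g_j)\big] + \tfrac13\big[3\lambda(g_j - f)\big],
$$
and using that $\varphi$ is convex (so that $I^{\varphi}$ is a convex modular), I would obtain
$$
I^{\varphi}[\lambda(\K_n f - f)] \miu \tfrac13 I^{\varphi}[3\lambda(\K_n f - \K_n g_j)] + \tfrac13 I^{\varphi}[3\lambda(\K_n g_j - g_j)] + \tfrac13 I^{\varphi}[3\lambda(g_j - f)].
$$
To the first summand I apply Theorem \ref{th3}, which bounds it by $\tfrac13 \|\Psis\|_1\, I^{\varphi}[\phis(2)^{-s}3\lambda(f - g_j)]$; the second summand will be controlled by Theorem \ref{lux-conv}; the third is a pure density term.

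The decisive choice is that of $\lambda$. Since the modular inequality produces the factor $\phis(2)^{-s}$, which by Lemma \ref{lemma1}(ii) satisfies $\phis(2)^{-s} \geq 2^s > 1$, I would set $\lambda := \lambda'\,\phis(2)^{s}/3$, so that both $3\lambda \miu \lambda'$ and $\phis(2)^{-s}3\lambda \miu \lambda'$ hold. By monotonicity of $I^{\varphi}$ in the scalar factor, the first and third summands are then dominated by $\tfrac13\|\Psis\|_1\, I^{\varphi}[\lambda'(f - g_j)]$ and $\tfrac13\, I^{\varphi}[\lambda'(g_j - f)]$, both of which can be made smaller than $\ep/3$ by choosing $j$ large and then fixing $g := g_j$. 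With $g$ now fixed and continuous, Theorem \ref{lux-conv} (being a norm-convergence statement, valid for every scalar, in particular $3\lambda$) forces the middle term below $\ep/3$ for all sufficiently large $n$. Hence $I^{\varphi}[\lambda(\K_n f - f)] < \ep$ eventually, and since $\ep$ is arbitrary the limit is $0$.

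I expect the main obstacle to be precisely this bookkeeping of the scaling constant: the density only gives control of the modular for one specific $\lambda'$, while Theorem \ref{th3} degrades the admissible scalar by the factor $\phis(2)^{-s} > 1$; reconciling the two is exactly what prevents a norm-convergence conclusion and explains why only the existence of some $\lambda > 0$ is claimed. A secondary point requiring care is the order of the quantifiers: the function $g$ must be fixed (depending on $\ep$) \emph{before} invoking the convergence $\K_n g \to g$, since the threshold in $n$ coming from Theorem \ref{lux-conv} depends on $g$.
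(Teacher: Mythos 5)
Your proposal is correct and follows essentially the same route as the paper's proof: modular density of $C_+(\rr)$, a three-term convex splitting, Theorem \ref{th3} for the first term, Theorem \ref{lux-conv} for the middle term, and the choice of $\lambda$ so that $\max\{\phis(2)^{-s}3\lambda,\ 3\lambda\}\miu \bar\lambda$. Your phrasing of the density via a sequence $(g_j)$ with a single $\lambda'$ even makes slightly more explicit that the final $\lambda$ does not depend on $\ep$, which is needed for the limit statement.
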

\begin{proof}
Let $\ep>0$ be fixed. Since $C_+(\rr)$ is modularly dense in $L^{\varphi}_+(\rr)$, there exists $\bar \lambda >0$ and $g \in C_+(\rr)$ such that:
\be \label{densitaa}
I^{\varphi}\left[\bar \lambda \left( f(\cdot) - g(\cdot)  \right)\right]\ <\ \ep.
\ee
Now, choosing $\lambda >0$ such that:
$$
\max \left\{ \phis(2)^{-s}3\, \lambda,\ 3\, \lambda \right\}\, \miu\ \bar \lambda,
$$
by the property of $I^\varphi$, using Theorem \ref{th3}, and the convexity of $\varphi$, we can write what follows:
\vskip0.1cm
$$
\hskip-7cm I^{\varphi}\left[\lambda \left( \K_n(f, \cdot) - f(\cdot)  \right)\right]\ 
$$
$$
\hskip0.8cm \miu\ {1 \over 3} \left\{I^{\varphi}\left[3 \lambda \left( \K_n(f, \cdot) - \K_n(g, \cdot)  \right)\right] + I^{\varphi}\left[3 \lambda \left( \K_n(g, \cdot) - g(\cdot)  \right)\right] \right.
$$
$$
\hskip-7.2cm +\left. \ I^{\varphi}\left[3 \lambda \left( g(\cdot) - f(\cdot)  \right)\right]  \right\}
$$
$$
\miu\ \|\Psis\|_1\, I^{\varphi}\left[ \phis(2)^{-s} 3 \lambda\,  \left|f(\cdot)-g(\cdot)\right|\right]+ I^{\varphi}\left[ 3 \lambda \left( \K_n(g, \cdot) - g(\cdot)  \right)\right]  
$$
$$
\hskip-7.4cm +\ I^{\varphi}\left[3 \lambda \left( g(\cdot) - f(\cdot)  \right)\right]
$$
$$
\hskip-0.2cm \miu\ \left( \|\Psis\|_1 + 1\right)I^{\varphi}\left[\bar \lambda \left( g(\cdot) - f(\cdot)  \right)\right] + I^{\varphi}\left[\bar \lambda \left( \K_n(g, \cdot) - g(\cdot)  \right)\right], 
$$
for every $n \in \N^+$ sufficiently large. Now, using (\ref{densitaa}):
$$
I^{\varphi}\left[\lambda \left( \K_n(f, \cdot) - f(\cdot)  \right)\right]\, \miu\ \left( \|\Psis\|_1 + 1\right)\, \ep\ + I^{\varphi}\left[\bar \lambda \left( \K_n(g, \cdot) - g(\cdot)  \right)\right],
$$
and since Theorem \ref{lux-conv} holds, we get:
$$
I^{\varphi}\left[\bar \lambda \left( \K_n(g, \cdot) - g(\cdot)  \right)\right]\ <\ \ep,
$$
for every $n \in \N^+$ sufficiently large. In conclusion, we finally obtain:
$$
I^{\varphi}\left[\lambda \left( \K_n(f, \cdot) - f(\cdot)  \right)\right]\, \miu\ \left( \|\Psis\|_1 + 2\right)\, \ep,
$$
for every $n \in \N^+$ sufficiently large. Thus the proof follows by the arbitrariness of $\ep>0$.
\end{proof}

We can finally observe that the results proved in this section can be extended to not-necessarily non-negative functions. Indeed, for any fixed $f:\rr \to \R$, if $\inf_{\xx \in \rr} f(\xx) =: c <0$, the sequences $(\K_n(f-c, \cdot)+c)_{n \in \N+}$ turns out to be convergent to $f$ (in all the above considered convergences, provided that $f$ belongs to the corresponding spaces). For more details, see e.g., \cite{COGA1,COGA2,COVI3}.

%%%%%%%%%%%%%%%%%%%%%%%%%%%

\section{Concrete examples: Orlicz spaces and sigmoidal functions} \label{sec4}

 Here, we present examples of sigmoidal functions that can be used as activation functions in the neural network approximation process studied in the present paper. 
   
    First of all, we can consider the logistic (see Fig. \ref{fig1}, left) and hyperbolic tangent sigmoidal functions (see e.g. \cite{CACH1,CACH2,CHGE,ILKYMA1}), defined respectively by:
$$
\sigma_{\ell}(x) := (1+e^{-x})^{-1}, \hskip0.8cm \sigma_{h}(x) := 2^{-1}(\tanh(x)+1), \hskip0.8cm x \in \R.
$$
\begin{figure}[!htb]
\centering
\includegraphics[scale=1.4]{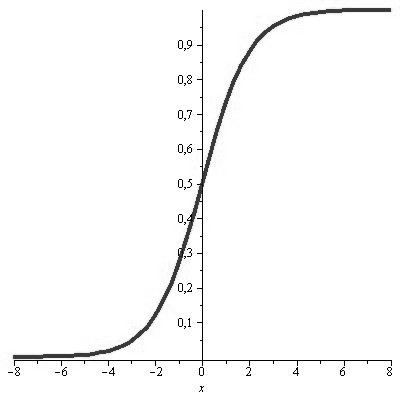}
\hskip1cm
\includegraphics[scale=1.4]{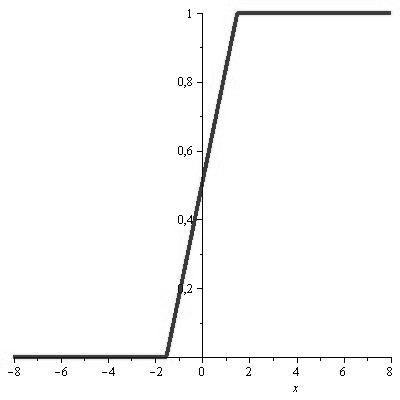}
\caption{{\small The plots of $\sigma_\ell$ (left) and $\sigma_R$ (right), i.e., the logistic and the ramp sigmoidal functions.}} \label{fig1}
\end{figure}

  Such kind of sigmoidal functions are the most used in the applications by means of neural networks, since their smoothness make them very suitable for the implementation of learning algorithms, such as the well-known back-propagation algorithm, see e.g. \cite{SXJ1,BAGR1,GN,GNSA,GOLO1,LLZ1,LWZ1,OL1}.

  Assumptions $(\Sigma 1)$, $(\Sigma 2)$, and $(\Sigma 3)$ required in order to apply the present theory, are satisfied in both the above cases. In particular, we can observe that both $\sigma_{\ell}$ and $\sigma_{h}$ have an exponential decay to zero as $x \to -\infty$, then condition $(\Sigma 3)$ is satisfied for every $\alpha >0$, see e.g., \cite{COVI2}.
  
  The above exponential decay of $\sigma_{\ell}$ and $\sigma_{h}$, it is then inherited from the corresponding univariate and multivariate density functions $\phi_{\sigma_\ell}(x)$ (see Fig. \ref{fig2}, left), $\phi_{\sigma_h}(x)$, and $\Psi_{\sigma_\ell}(\xx)$ (see Fig. \ref{fig3}, left), $\Psi_{\sigma_h}(\xx)$, for $|x| \to +\infty$, and $\|\xx\|_2 \to +\infty$, respectively.
\begin{figure}[!htb]
\centering
\includegraphics[scale=1.3]{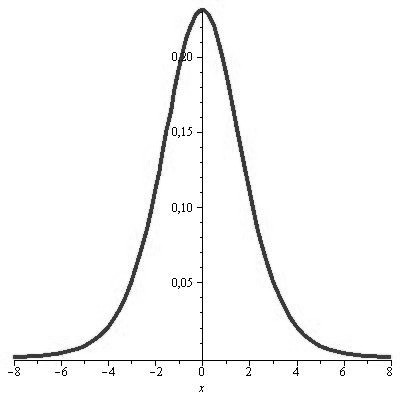}
\hskip1cm
\includegraphics[scale=1.3]{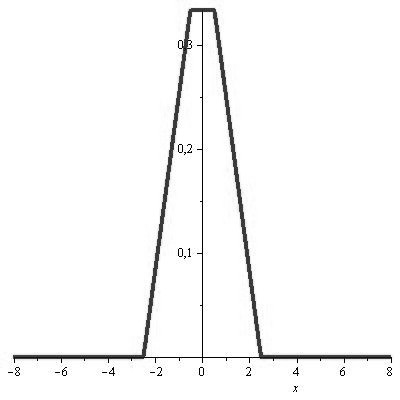}
\caption{{\small The plots of the $\phi_{\sigma_\ell}$ (left) and $\phi_{\sigma_R}$ (right).}} \label{fig2}
\end{figure}
\begin{figure}[!htb]
\centering
\includegraphics[scale=0.87]{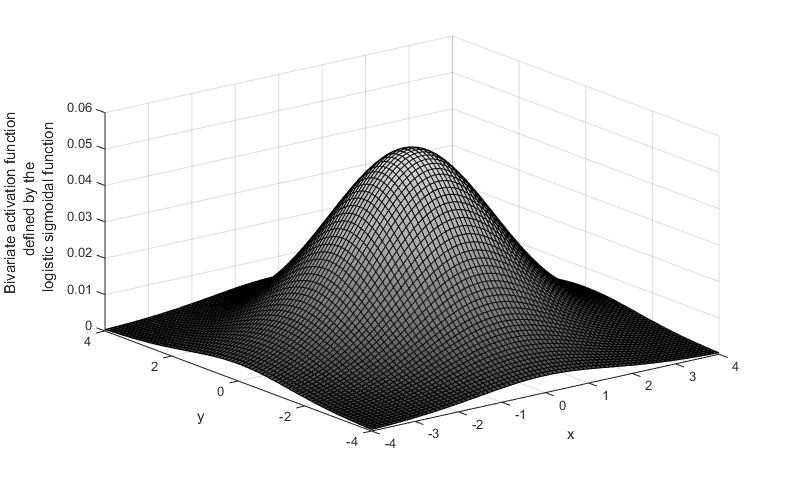}
\includegraphics[scale=0.87]{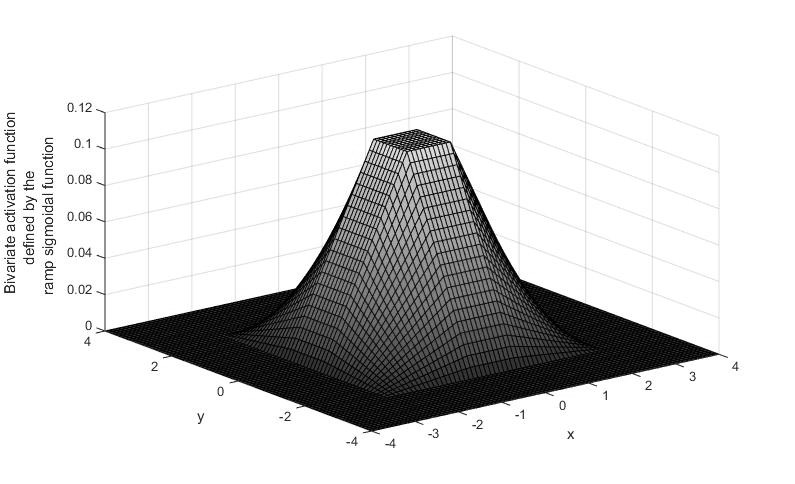}
\caption{{\small The plots of the bivariate functions $\Psi_{\sigma_\ell}$ (left) and $\Psi_{\sigma_R}$ (right).}} \label{fig3}
\end{figure}

  Further, since the above theory still holds in case of not necessarily smooth sigmoidal functions, we also provide examples of such functions. 
     
   An example of non-smooth, but continuous sigmoidal activation function is given by the well-known ramp function $\sigma_R$ (see e.g. \cite{CHGE,COSP3} and Fig. \ref{fig1}, right) defined by:
$$
\sigma_R(x)\ :=\
\left\{
\begin{array}{l}
0, \hskip2.8cm x<-3/2,\\
x/3+(1/2), \hskip0.9cm -3/2 \miu x \miu 3/2,\\ 
1, \hskip2.8cm x > 3/2.
\end{array}
\right.
$$

  In addition, it is easy to see that $(\Sigma 3)$ is satisfied for every $\alpha>0$, and the corresponding $\phi_{\sigma_R}$, $\Psi_{\sigma_R}$ (see Fig. \ref{fig2} right and Fig. \ref{fig3} right, respectively) have compact support.

   The above sigmoidal functions can be used in order to obtain the modular convergence for the operators $\K_n$ in some well-known examples of Orlicz spaces.
   
   For instance, we can consider the Orlicz spaces generated by the convex $\varphi$-function $\varphi(u) :=u^p$, $u \mau 0$, $1 \miu p < +\infty$, i.e., the well-known $L^p({\cal R})$ spaces (see e.g., \cite{BCS2}). If we consider the non-negative functions belonging to $L^p({\cal R})$ we define as above the space $L^p_+({\cal R})$. Further, we can also observe that, the above $\varphi$-function satisfies the $\Delta_2$-condition, then the modular convergence coincides with the norm-convergence.

  Here, it turns out that $I^{\varphi}[f]=\|f\|_p^p$, and Theorem \ref{th4} becomes:
\begin{corollary} \label{cor1}
For any $f \in L^p_+({\cal R})$, $1 \miu p < +\infty$, we have:
$$
\lim_{n \to+\infty}\|\K_n(f,\cdot)-f(\cdot)\|_p\ =\ 0,
$$
where $\K_n$ can be considered generated by all the above mentioned examples of sigmoidal functions.
\end{corollary}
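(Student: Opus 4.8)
The plan is to obtain Corollary \ref{cor1} as a direct specialization of the modular convergence Theorem \ref{th4} to the $\varphi$-function $\varphi(u) := u^p$, and then to upgrade the modular convergence it yields to genuine $L^p$-norm convergence. First I would verify that $\varphi(u) = u^p$, with $1 \miu p < +\infty$, is a convex $\varphi$-function: it satisfies $(\varphi 1)$--$(\varphi 3)$ (it vanishes at $0$, is strictly positive, continuous, non-decreasing on $\R^+_0$, and diverges to $+\infty$) and is convex for every $p \mau 1$. Hence all hypotheses under which Theorems \ref{th3} and \ref{th4} were proved are in force. The generated Orlicz space is exactly $L^p(\rr)$, and the modular reduces to $I^{\varphi}[f] = \int_\rr |f(\xx)|^p\, d\xx = \|f\|_p^p$, so that $I^{\varphi}[\lambda g] = \lambda^p \|g\|_p^p$ for every $\lambda > 0$.

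Before invoking the abstract machinery I would also record that each activation function named in the statement fits the framework, i.e. the logistic $\sigma_\ell$, the hyperbolic tangent $\sigma_h$, and the ramp $\sigma_R$ all satisfy $(\Sigma 1)$, $(\Sigma 2)$, and $(\Sigma 3)$; this is precisely what was observed in the discussion preceding the corollary (exponential decay of $\sigma_\ell$ and $\sigma_h$, and compact support of $\phi_{\sigma_R}$, $\Psi_{\sigma_R}$, each forcing $(\Sigma 3)$ for every $\alpha > 0$). For each such choice the operators $\K_n$ are well defined and Lemma \ref{lemma1} together with the entire chain of results of Section \ref{sec3} applies verbatim.

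With these two verifications in place, the core of the proof is immediate: Theorem \ref{th4}, applied to $f \in L^p_+(\rr) = L^{\varphi}_+(\rr)$, furnishes some $\lambda > 0$ with $\lim_{n \to +\infty} I^{\varphi}[\lambda(\K_n(f,\cdot) - f(\cdot))] = 0$, that is $\lambda^p \|\K_n(f,\cdot) - f\|_p^p \to 0$; dividing by the fixed constant $\lambda^p > 0$ gives $\|\K_n(f,\cdot) - f\|_p \to 0$. The only conceptual point worth flagging --- rather than a genuine obstacle --- is the passage from ``convergence for some $\lambda$'' to ``convergence in norm'': since $\varphi(u)=u^p$ obeys the $\Delta_2$-condition, $\varphi(2u) = 2^p \varphi(u)$, modular and norm convergence coincide in $L^p(\rr)$, so the single $\lambda$ produced by Theorem \ref{th4} automatically propagates to all $\lambda > 0$. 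No additional estimate is required, since the genuine analytic content --- the modular inequality of Theorem \ref{th3} and the continuous-function case of Theorem \ref{lux-conv} --- is already absorbed into Theorem \ref{th4}.
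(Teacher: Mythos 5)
Your proposal is correct and follows essentially the same route as the paper: the corollary is obtained by specializing Theorem \ref{th4} to the convex $\varphi$-function $\varphi(u)=u^p$, for which $I^{\varphi}[f]=\|f\|_p^p$, and the passage from modular to norm convergence is exactly the $\Delta_2$ (here, homogeneity) observation the paper makes just before stating the result. Your explicit computation $I^{\varphi}[\lambda g]=\lambda^p\|g\|_p^p$ is a clean way to make that last step concrete.
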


   Other useful examples of Orlicz spaces are the interpolation spaces $L^{\alpha} \log^{\beta} L({\cal R})$ (also known as the Zygmund spaces). Such spaces, can be generated by the $\varphi$-functions $\varphi_{\alpha, \beta}(u):= u^{\alpha}\log^{\beta}(u+e)$, for $\alpha \mau 1$, $\beta>0$, $u \mau 0$. Note that, also $\varphi_{\alpha, \beta}$ satisfies the $\Delta_2$-condition, then modular convergence and norm-convergence are equivalent.  The convex modular functional corresponding to $\varphi_{\alpha, \beta}$ are: 
$$
I^{\varphi_{\alpha, \beta}}[f] := \int_{\cal R} |f(\xx)|^{\alpha} \log^{\beta}(e+|f(\xx)|)\ d\xx,\ \hskip0.5cm (f \in M({\cal R})).
$$

Obviously, if we consider the non-negative (a.e.) functions belonging to $L^{\alpha} \log^{\beta} L({\cal R})$, we can define the space $L^{\alpha}_+ \log^{\beta} L({\cal R})$.

 Finally, as last examples, we recall the exponential spaces (and the corresponding space of the a.e. non-negative functions) generated by $\varphi_{\gamma}(u)=e^{u^{\gamma}}-1$, for $\gamma>0$, $u \mau 0$. In this latter case, the $\Delta_2$-condition is not satisfied, then the norm convergence it turns out strictly stronger than the modular one.  The modular functional corresponding to $\varphi_{\gamma}$ is:
$$
I^{\varphi_{\gamma}}[f] := \int_{\cal R} (e^{|f(\xx)|^{\gamma}}-1)\ d\xx,\ \hskip0.5cm (f \in M({\cal R})).
$$
In both the last two examples, the modular inequality of Theorem \ref{th3} and the modular convergence of Theorem \ref{th4} hold, and an analogous of Corollary \ref{cor1} can be stated. 

For further examples of Orlicz spaces and sigmoidal functions, see e.g., \cite{CS1,BCS2,CMV1,RIRU1,SS1}.

%%%%%%%%%%%%%%%%%%%%%%

\section{Applications to real world cases} \label{sec5}

In this section, we provide some applications of the above results in some concrete cases.

   We begin considering a real world case involving one-dimensional data in order to show the modeling capabilities of the above neural network operators.

     In the table of Fig. \ref{figtab}, it has been reported the data provided by the official dataset of the website {\tt www.tuttitalia.it} concerning the trend of the Italian, Foreign, and Total population living in Perugia (Italy) from 2004 to 2017. 

\begin{figure}[!htb]
\centering
\includegraphics[scale=1.02]{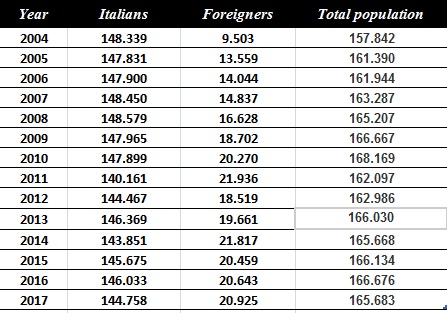}   
\caption{{\small The trend of the Italian, Foreign, and Total population living in Perugia (Italy) from 2004 to 2017.  The entries of the above table represent the average population for each considered year. The above data have been considered from the official dataset of the website {\tt www.tuttitalia.it}.}} \label{figtab}
\end{figure}         

  The entries of the above table represent the average population for each considered year.
     
     The above data have been modeled by the max-product neural network operators $\K_n(f,\cdot)$ in the one-dimensional case, and based upon the density function $\phi_{\sigma_\ell}$ generated by the logistic function $\sigma_\ell$. 
     
     More precisely, the above data have been modeled by the operators $\K_{13}(f,\cdot)$ (i.e., with $n=13$) on the interval $[a,b]=[0,1]$. Here, the years from $2004$ to $2017$ have been mapped on the nodes $k/n$, with $k=0, 1, \dots, n$, $n=13$; the averages 
$$n\, \int_{k/n}^{(k+1)/n}f(u)\, du,
$$
have been replaced with the data of the first, the second and the third column of the table in Fig. \ref{figtab}, respectively. The plots of the described neural network models have been given in Fig. \ref{fig5}. 
\begin{figure}[!htb]
\centering
\includegraphics[scale=0.27]{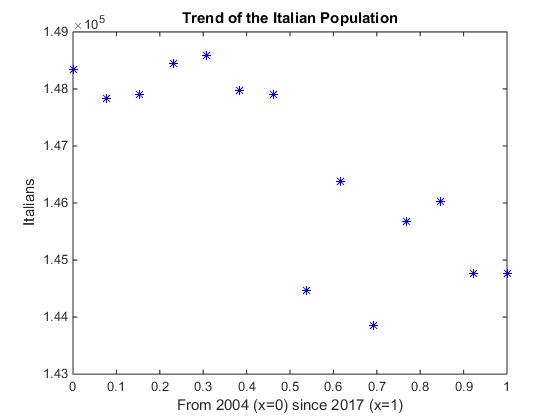}
\includegraphics[scale=0.27]{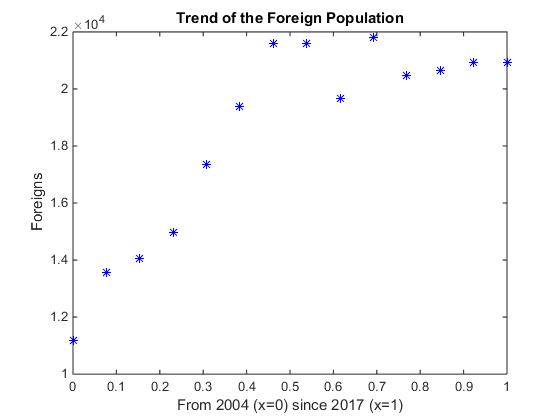}
\includegraphics[scale=0.27]{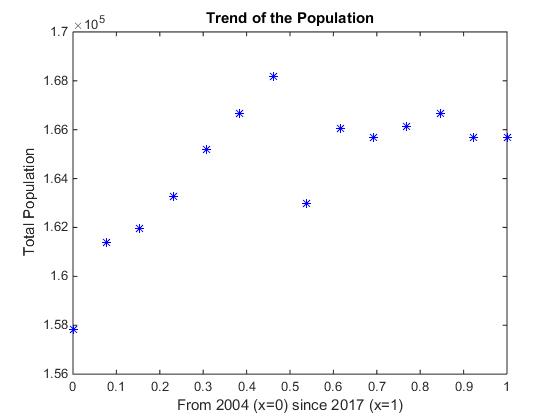}
\caption{{\small The modeled trends of the Italian, Foreign, and Total population living in Perugia (Italy) from 2004 to 2017 modeled by the max-product neural network operators $\K_{13}(f,\cdot)$.}} \label{fig5}
\end{figure}
\vskip0.5cm
     
     We stress that, the above example has been given with the main purpose to show that the max-product neural network operators can be used also to model data arising from real world problems and not only for the theoretical approximation of mathematical functions defined by a certain analytical expression.
\vskip0.5cm

  Applications in case of phenomenon involving multivariate data, can be given considering the case of image reconstruction and enhancement. This subject is very related to Kantorovich-type operators (see, e.g., \cite{ING1,ING2}) since they revealed to be suitable in case of reconstruction of not necessarily continuous data (see, e.g., Corollary \ref{cor1}) and in general, images are the most common examples of multivariate discontinuous functions. Indeed, any static gray scale image presents at the edges of the figures, jumps of gray levels in the gray scale, and from the mathematical point of view, these can be modeled by means of discontinuities. 

     For instance, using, e.g., the bi-variate version of the above max-product neural network operators we can reconstruct images on a certain domain starting from a discrete set of mean values only. 
     
     More precisely, using a model of the data similar to those used in the one-dimensional case, and knowing that by the above operators we can approximate data on a continuous domain, we can reconstruct any given image on the same nodes of the original one, or at a grid of nodes of higher dimension, in fact obtaining a reconstructed image with higher resolution with respect to the original one. \\
     
     For instance, in Fig. \ref{fig6} we have a vascular CT image (computer tomography) representing an aneurysm of the abdominal aorta artery.\\

\begin{figure}[!htb]
\centering
\includegraphics[scale=2.8]{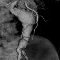}
\caption{{\small A portion of the abdominal aorta artery depicting an aneurysm of resolution $60 \times 60$ pixels (source: Santa Maria della Misericordia Hospital of Perugia
- Italy).}} \label{fig6}
\end{figure}

     The image in Fig. \ref{fig6} has a low resolution ($60 \times 60$ pixels) and the edges of the vessel are not clearly detectable, then from the diagnostic point of view, the image have a low utility.     

  First of all, a mere reconstruction of the above vascular image can be given, e.g., by the bi-variate max-product neural network operator $\K_n$ based upon the bi-variate density function $\Psi_{\sigma_\ell}$ generated by the logistic function $\sigma_\ell$, with various $n$ (see Fig. \ref{fig7}).
\begin{figure}[!htb]
\centering
\includegraphics[scale=2.2]{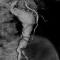}
\hskip0.8cm
\includegraphics[scale=2.2]{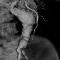}
\caption{{\small The reconstruction of the original image in Fig. \ref{fig6} by the max-product operators $\K_n$ with $n=5$ (on the left) and $n=10$ (on the right).}} \label{fig7}
\end{figure}
Finally, in Fig. \ref{fig8} are shown the reconstruction of the original image of Fig. \ref{fig6} by the max-product operators $\K_n$ with $n=5$ and $n=20$, respectively obtained by a double resolution, i.e., $120 \times 120$ pixels, with respect to the original one. In this sense, the images in Fig. \ref{fig8} have been  enhanced with respect to that of Fig. \ref{fig6}.
\begin{figure}[!htb]
\centering
\includegraphics[scale=1.15]{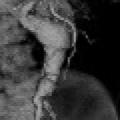}
\hskip0.3cm
\includegraphics[scale=1.15]{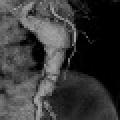}
\caption{{\small The reconstruction of the original image in Fig. \ref{fig6} by the max-product operators $\K_n$ with $n=5$ (on the left) and $n=20$ (on the right). Note that, the above reconstructions have been given by a double resolution (i.e., $120 \times 120$ pixels) with respect to the original image.}} \label{fig8}
\end{figure}
    Indeed, we can observe that here the contours and the edges of the vessels are more defined with respect to the original one. 
     Hence, NN operators open also an application field in image reconstruction and enhancement.\\
      
%%%%%%%%%%%%%%%%%%%
\noindent
{\bf Conflict of interest}
The authors declare that they have no conflict of interest.

\noindent {\bf Ethical statement}
Ethical approval was waived considering that the CT images analyzed were anonymized and the
results did not influence any clinical judgment.

\section*{Acknowledgments}

\begin{itemize}
\item 
The authors are members of the Gruppo Nazionale per l’Analisi Matematica, la Probabilitàe le loro Applicazioni (GNAMPA) of the Istituto Nazionale di Alta Matematica (INdAM).Moreover, the first and the second authors of the paper have been partially supported within the 2018 GNAMPA-INdAM Project ‘‘Dinamiche non autonome, analisi reale eapplicazioni,’’ while the second and the third authors within the project: Ricerca di Base 2017 dell’Università degli Studi di Perugia ‘‘Metodi di teoria degli operatori e di Analisi Reale per problemi di approssimazione ed applicazioni.

\item
This is a post-peer-review, pre-copyedit version of an article published in  Neural Computing and Applications. The final authenticated version is available online at: http://dx.doi.org/10.1007/s00521-018-03998-6.
\item The authors would like to thank the referees for their useful suggestions which led us to insert the section devoted to real world applications.
\end{itemize}

\small
%

%%%%%%

\end{document}